\documentclass[reqno,12pt]{amsart}

\NeedsTeXFormat{LaTeX2e}[1994/12/01]

\usepackage{amsmath, amsthm, amsfonts, amssymb,color}
\input{mathrsfs.sty}

\usepackage{tikz}
\usetikzlibrary{arrows.meta}

%\usepackage{vmargin}
%\setpapersize{A4}
\textheight23.4cm
\textwidth16cm

\addtolength{\topmargin}{-40pt}
\addtolength{\oddsidemargin}{-1.8cm}
\addtolength{\evensidemargin}{-1.8cm}

\usepackage{amsmath}
\usepackage{amsfonts}
\usepackage{amssymb}
\usepackage{eufrak}
\usepackage{amscd}
\usepackage{amsthm}
\usepackage{amstext}
\usepackage[all]{xy}

%\CompileMatrices
%\usepackage{isolatin1}

 \newcommand{\e}{{\varepsilon}}

\newcommand{\supp}{\operatorname{supp}}

\newcommand{\diam}{\operatorname{diam}}

\newcommand{\Z}{\mathbb Z/2\mathbb Z}

   \theoremstyle{plain}%default
   \newtheorem{thm}{Theorem}%[section]
   
   \newtheorem{lem}[thm]{Lemma}
   \newtheorem{cor}[thm]{Corollary}
   \theoremstyle{definition}
   
   \newtheorem{defn}[thm]{Definition}

	\newtheorem{example}{Example}
	\newtheorem{remark}[thm]{Remark}
	
   \theoremstyle{remark}

% \renewcommand{\theequation}{\thesection.\arabic{equation}}
 %\numberwithin{equation}{section}

 %\usepackage{showkeys}

\author{V. Manuilov}

\date{}

\address{Moscow Center for Fundamental and Applied Mathematics, Moscow State University,
Leninskie Gory 1, Moscow, 
119991, Russia}

\email{manuilov@mech.math.msu.su}

%\thanks{Supported by the RSF grant 25-11-00018.}

\title {Paths in graphs: bounded geometry and property A}

 \sloppy

\begin{document}

\begin{abstract}
We expose a class of discrete metric spaces, for which bounded geometry is equivalent to the property A of G. Yu. This class includes the coarse disjoint union of $(\Z)^n$, $n\in\mathbb N$, and consists of spaces of simple paths in a class of graphs that includes cactus graphs, with the metric defined as the number of edges in the symmetric difference of the paths. We also show that if a space in this class does not have bounded geometry then it contains a subspace of bounded geometry without property A.

\end{abstract}

\maketitle

\section*{Introduction}

Discrete metric spaces of bounded geometry are central to coarse geometry, primarily because their Rips complexes are finite-dimensional, which frequently simplifies arguments. Notably, within this class, many definitions that yield different results for general discrete spaces become equivalent. Key examples of spaces with bounded geometry include all finitely generated groups endowed with a word length metric, and, more generally, all uniformly finite graphs.

Property A, a concept introduced by G. Yu \cite{Yu} in the context of coarse geometry, serves as a coarse version of amenability. Its importance stems from the fact that it guarantees coarse embeddability into a Hilbert space --- a property crucial for problems surrounding the Baum-Connes conjecture, even though property A is a strictly stronger condition.

It is known that bounded geometry and property A are independent. For instance, every locally finite tree (with its graph metric) satisfies property A, irrespective of whether it has bounded geometry. (Recall: local finiteness requires each vertex to have finitely many neighbors, while bounded geometry demands a uniform bound on this number across all vertices). Furthermore, among spaces that do have bounded geometry, one can find examples both with and without property A. To complete the picture, we should mention the Nowak's example \cite{Nowak} of a coarse disjoint union of the groups $(\mathbb Z/2\mathbb Z)^n$, $n\in\mathbb N$, which possesses neither bounded geometry nor property A.

Nowak's example of a metric space without property A constructed from finite abelian groups shows that the absense of bounded geometry may be one of the reasons for the failure of property A. The two questions arise: \emph{can absense of bounded geometry be the sole reason for the failure of property A?} and \emph{is there a metric space without property A such that every its subspace of bounded geometry has property A?} Nowak's example would be a natural candidate for the second question, but it follows from \cite{Ostrovsky} that it contains subspaces of bounded geometry without property A. Being unable to answer the second question, we give a partial answer to the first question and expose a natural class of discrete metric spaces, for which bounded geometry is equivalent to property A. This specific class includes the Nowak's example and consists of the spaces of simple paths in the class of graphs that we call graphs with controlled cycles, with the metric that counts edges in the symmetric difference of paths. In particular, this class of graphs includes the cactus graphs.

\section{Basics from coarse geometry}

Our reference on coarse geometry is \cite{Nowak-Yu}.
Let $X$ be a discrete metric space with the metric $d_X$. We write $B_r(x)$ for the open ball of radius $r$ centered at $x\in X$, $B_r(x)=\{y\in X:d_X(x,y)<r\}$. If $Y\subset X$ is a finite subset then we denote by $|Y|$ the number of points in $Y$. Recall that $X$ is \emph{uniformly discrete} if $\inf_{x\neq y}d_X(x,y)>0$ and it has \emph{bounded geometry} if for any $r>0$ there exists $C_r$ such that $|B_r(x)|\leq C_r$ for any $x\in X$. 

A discrete space $X$ has \emph{property A} if for any $R>0$, $\epsilon>0$ there exists $S>0$ and finite subsets $\{A_x\subset X\times\mathbb N:x\in X\}$ such that 
\begin{enumerate}
\item
$\frac{|A_x\Delta A_y|}{|A_x\cap A_y|}<\epsilon$ if $d_X(x,y)\leq R$;
\item
$A_x\subset B_S(x)\times\mathbb N$ for any $x\in X$.
\end{enumerate} 

Note that it was shown in \cite{Zhu} that, for bounded geometry spaces, one can get rid of the seemingly redundant factor $\mathbb N$ and to use subsets $A_x\subset X$, $x\in X$, in the above definition. 

A discrete space $X$ satisfies the \emph{Higson--Roe condition} if for any $R>0$, $\epsilon>0$ there exists $S>0$ and a map $\xi:X\to l_2(X)$ such that 
\begin{itemize}
\item[(0)]
$\|\xi(x)\|=1$ for any $x\in X$;
\item[(1)]
$\|\xi(x)-\xi(y)\|<\epsilon$ if $d_X(x,y)\leq R$; 
\item[(2)]
$\supp\xi(x)\subset B_S(x)$ for any $x\in X$.
\end{itemize} 

Property A implies the Higson--Roe condition, and if $X$ is of bounded geometry then they are equivalent (\cite{Nowak-Yu}, Section 4.2).

Given two uniformly discrete metric spaces, $(X,d_X)$ and $(Y,d_Y)$, a {\it coarse equivalence} between them is a map $f:X\to Y$ such that there exists a self-homeomorphism $\varphi$ of $[0,\infty)$ with $\varphi^{-1}(d_X(x,x'))<d_Y(f(x),f(x'))<\varphi(d_X(x,x'))$ for any $x,x'\in X$. 

For a sequence of metric spaces $(X_n,d_n)$, $n\in\mathbb N$, their {\it coarse disjoint union} is the space $\sqcup_{n\in\mathbb N} X_n$ with a metric $d$ such that on each $X_n$ $d$ coincides with $d_n$, and the distance between points in different $X_n$'s can be defined by the formula 
$$   
d(y_n,y_m)=d_n(y_n,x_n)+d_m(y_m,x_m)+\alpha_{nm},\quad n\neq m,
$$
where $x_i\in X_i$ are fixed points, $y_i\in X_i$, and $\alpha_{nm}$ satisfies 
\begin{equation}\label{e2}
\lim_{n,m\to\infty,n\neq m}\alpha_{nm}=\infty.
\end{equation} 
Clearly, $d$ satisfies the triangle inequality.

\begin{lem}
Different choice of the fixed points and of the numbers $\alpha_{nm}$ gives a metric that is coarsely equivalent to $d$, hence the coarse disjoint union is well defined up to coarse equivalence. 

\end{lem}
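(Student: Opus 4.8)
The plan is to prove the assertion by showing that the identity map $\id\colon(\sqcup_n X_n,d)\to(\sqcup_n X_n,d')$ is a coarse equivalence, where $d'$ is the metric built from a second choice of basepoints $x'_n\in X_n$ and numbers $\alpha'_{nm}$. Since $\id$ restricts to the identity on each $X_n$, and both $d$ and $d'$ restrict there to $d_n$, it is an isometry on every piece; so only pairs of points lying in distinct components need attention. Because the hypotheses on $(x_n,\alpha_{nm})$ and on $(x'_n,\alpha'_{nm})$ are symmetric, it suffices to prove: for every $R>0$ there is $S(R)>0$ with $d(p,q)\le R\Rightarrow d'(p,q)\le S(R)$. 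Applying this in both directions produces two control functions $S,S'$, and any self-homeomorphism $\varphi$ of $[0,\infty)$ dominating both (a continuous strictly increasing majorant, adjusted near $0$ using uniform discreteness) satisfies $\varphi^{-1}(d(p,q))<d'(p,q)<\varphi(d(p,q))$, i.e. witnesses the coarse equivalence.

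The core of the argument is a finiteness statement: for each $R>0$ the set $\mathcal{P}_R=\{(n,m):n\neq m,\ \alpha_{nm}\le R\}$ is finite. Two ingredients enter. First, since $d$ is a metric, the triangle inequality applied to the three basepoints $x_n,x_m,x_l$ in distinct components, together with $d(x_i,x_j)=\alpha_{ij}$, gives $\alpha_{nl}\le\alpha_{nm}+\alpha_{ml}$ (and $\alpha_{nm}=\alpha_{mn}$ by symmetry of $d$). Hence, if a single index $n_0$ had $\alpha_{n_0 m}\le R$ for infinitely many $m$, then $\alpha_{mm'}\le 2R$ for infinitely many pairs of distinct indices with $m,m'\to\infty$, contradicting \eqref{e2}; so each index lies in only finitely many elements of $\mathcal{P}_R$. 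Second, \eqref{e2} supplies an $N$ with $\alpha_{nm}>R$ whenever $n,m\ge N$, $n\neq m$, so every pair in $\mathcal{P}_R$ has some index $<N$. Combining the two observations, $\mathcal{P}_R$ is finite.

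Granting this, fix $R>0$ and $p=y_n\in X_n$, $q=y_m\in X_m$ with $n\neq m$ and $d(p,q)\le R$. Reading off the defining formula for $d$ yields $d_n(y_n,x_n)\le R$, $d_m(y_m,x_m)\le R$, and $\alpha_{nm}\le R$, so $(n,m)\in\mathcal{P}_R$. Using the triangle inequality for $d'$ together with $d'|_{X_n}=d_n$,
\[
d'(p,q)\le d_n(y_n,x_n)+d'(x_n,x_m)+d_m(x_m,y_m)\le 2R+d'(x_n,x_m),
\]
and $d'(x_n,x_m)=d_n(x_n,x'_n)+d_m(x_m,x'_m)+\alpha'_{nm}\le D_R:=\max_{(n,m)\in\mathcal{P}_R}\bigl(d_n(x_n,x'_n)+d_m(x_m,x'_m)+\alpha'_{nm}\bigr)$, which is finite since $\mathcal{P}_R$ is finite. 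Hence $d'(p,q)\le 2R+D_R=:S(R)$ for points in distinct components, and $S(R)\ge R$ handles pairs inside a single $X_n$. Swapping the roles of $d$ and $d'$ gives $S'(R)$, and assembling $S,S'$ into $\varphi$ as in the first paragraph completes the proof.

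The only real obstacle is the finiteness of $\mathcal{P}_R$ in the second paragraph: this is precisely where both standing hypotheses on the $\alpha_{nm}$ are used — that they make $d$ a genuine metric (giving the inequality $\alpha_{nl}\le\alpha_{nm}+\alpha_{ml}$) and that they diverge, \eqref{e2}. Everything else is routine triangle-inequality bookkeeping and the standard packaging of control functions into a homeomorphism.
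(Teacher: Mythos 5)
Your proof is correct and follows essentially the same route as the paper: both reduce the claim to producing, via (\ref{e2}) and the triangle inequality, a control function bounding $d'$ in terms of $d$ on pairs in distinct components. The only difference is presentational — you make explicit the finiteness of $\{(n,m):\alpha_{nm}\le R\}$, which the paper's proof uses implicitly when it asserts the existence of a homeomorphism $\varphi$ with $\alpha'_{nm}+\beta_n+\beta_m<\varphi(\alpha_{nm})$.
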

\begin{proof}
Let $x'_i\in X_i$, $d_i(x_i,x'_i)=\beta_i$, and let $\alpha'_{nm}$ satisfies (\ref{e2}). Set
$$   
d'(y_n,y_m)=d_n(y_n,x'_n)+d_m(y_m,x'_m)+\alpha'_{nm},\quad n\neq m,
$$
and for shortness' sake write $d_n(y_n,x_n)+d_m(y_m,x_m)=t$. It follows from (\ref{e2}) that there exists a self-homeomorphism $\varphi$ of $[0,\infty)$ such that $\alpha'_{nm}+\beta_n+\beta_m<\varphi(\alpha_{nm})$ for any $n,m\in\mathbb N$, $n\neq m$. Then
$$
d'(y_n,y_m)\leq t+\beta_n+\beta_m+\alpha'_{nm}<t+\varphi(\alpha_{nm})<\psi(t+\alpha_{nm})=\psi(d(y_n,y_m)),
$$
where $\psi(x)=\varphi(x)+x$. Interchanging $d$ and $d'$, we see that they are coarsely equivalent.
\end{proof}

When all $X_n$ are bounded, our definition of the coarse disjoint union coincides with the standard one (\cite{Nowak-Yu}, Example 1.4.12).

\section{Basics from graph theory}

Let $\Gamma=(V,E)$ be a graph with the set $V=V(\Gamma)$ of vertices and $E=E(\Gamma)$ of edges. We assume that it is undirected, has no self-loops, but may have  multiple edges between two vertices. 
%The number of edges from a vertex $v$ is the {\it degree} of $v$. $\Gamma$ is of uniformly bounded degree if the degrees of all vertices are uniformly bounded. 

A path in $\Gamma$ is a sequence $(e_1,\ldots,e_n)$ of edges such that each $e_i$ connects the vertices $v_{i-1}$ and $v_i$, $i=1,\ldots,n-1$. Then we may write this path also as a sequence of vertices, $[v_0,v_1,\ldots,v_n]$.
A graph is connected if any two vertices can be connected by a path. The length of a path is the number of edges. This allows to define a metric $d_\Gamma$ on the set $V$ of vertices of a connected graph as the minimal length of paths connecting the two given vertices. A path $[v_0,v_1,\ldots,v_n]$ is a {\it cycle} if all $v_i$, $i=0,1,\ldots,n-1$ are different and $v_n=v_0$. A path $[v_0,v_1,\ldots,v_n]$ is {\it simple} if all $v_i$, $i=0,1,\ldots,n$ are different. Note that in this case, each edge is passed not more than once. If a graph is connected then each two different vertices can be connected by a simple path. 
A graph is {\it rooted} if one vertex, say $v_0$, is fixed. Let $S(\Gamma)$ denote the set of all simple paths in $\Gamma$ starting at the root $v_0$, and let $P_v(\Gamma)\subset S(\Gamma)$ be its subset of all simple paths with the endpoint at $v\in\Gamma$.  

\begin{lem}\label{inject}
Let $\gamma,\delta\in S(\Gamma)$, $\gamma=(e_1,\ldots,e_n)$, $\delta=(e'_1,\ldots,e'_n)$, and let $\gamma$ and $\delta$ consist of the same edges. Then $\gamma=\delta$. 

\end{lem}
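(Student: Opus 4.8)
The plan is to prove this by induction along the path, using a single structural observation about simple paths from the root: after $k$ steps the vertex one has reached is incident to exactly one of the edges not yet traversed, namely the $(k+1)$-st edge of the path. This rigidity forces any two simple paths from $v_0$ with the same edge set to agree step by step.

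First I would fix notation: write $\gamma=[v_0,v_1,\ldots,v_n]$ as a sequence of vertices, so $e_i$ joins $v_{i-1}$ to $v_i$ and, since $\gamma$ is simple, $v_0,\ldots,v_n$ are pairwise distinct; in particular $e_1,\ldots,e_n$ are $n$ distinct edges. The key observation is: for $0\le k<n$, among the edges $e_{k+1},\ldots,e_n$ the vertex $v_k$ is an endpoint of $e_{k+1}$ and of no other. Indeed $e_i$ has endpoints $v_{i-1},v_i$, so if $v_k\in\{v_{i-1},v_i\}$ with $i\ge k+1$, then by distinctness of the $v_j$ we get $k\in\{i-1,i\}$, which forces $i=k+1$.

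Now I would run the induction. Write $\delta=[v'_0,\ldots,v'_n]$. Claim: for every $k$ with $0\le k\le n$ one has $e'_j=e_j$ for $j=1,\ldots,k$, and hence $v'_j=v_j$ for $j=0,\ldots,k$. The base case $k=0$ is the equality $v'_0=v_0$, which holds because both paths start at the root. For the inductive step, assume the claim for some $k<n$. The edge $e'_{k+1}$ is incident to $v'_k=v_k$. Since $\delta$ is simple, its edges are pairwise distinct, so $e'_{k+1}\notin\{e'_1,\ldots,e'_k\}=\{e_1,\ldots,e_k\}$; and since $\delta$ uses exactly the edges $e_1,\ldots,e_n$, this gives $e'_{k+1}\in\{e_{k+1},\ldots,e_n\}$. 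By the key observation, the only such edge incident to $v_k$ is $e_{k+1}$, so $e'_{k+1}=e_{k+1}$, and therefore $v'_{k+1}=v_{k+1}$. Taking $k=n$ yields $\gamma=\delta$.

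There is no analytic obstacle here; the only point requiring care is that the graph may have parallel edges, so edges must be compared as elements of $E(\Gamma)$ rather than as unordered pairs of endpoints — the observation above is phrased and applied at the level of edges precisely to avoid this pitfall.
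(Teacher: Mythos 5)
Your proof is correct and follows essentially the same route as the paper's: an induction along the path in which, at each step, simplicity forces the next edge of $\delta$ to be the unique not-yet-used edge of the common edge set incident to the current vertex. Your version merely isolates this forcing as an explicit ``key observation'' (and is careful about parallel edges), whereas the paper phrases the same step as a contradiction with $\delta$ revisiting a vertex.
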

\begin{proof}
Suppose that $e'_1\neq e_1$. Note that both $e_1$ and $e'_1$ start at $v_0$. Clearly, $e_1$ is passed by $\delta$ after $e'_1$, and, as $v_0$ is one of the endpoints of $e_1$, $\delta$ should pass through $v_0$ for the second time --- a contradiction with simplicity. Thus, $e'_1=e_1$. Inductively, $e'_i=e_i$ for any $i=1,\ldots,n$.  
\end{proof}

By Lemma \ref{inject}, we can, and will, view paths in $S(\Gamma)$ as subsets of $E(\Gamma)$.

A connected graph is a {\it cactus} graph if any two cycles have no more than one common vertex (hence, no common edges). Our reference for graphs, in particular for cactus graphs, is \cite{graphs}.

A graph $\Gamma'=(V',E')$ is a subgraph of $\Gamma=(V,E)$ if $V'\subset V$ and $E'$ consists of all edges from $E$ such that both their endpoints are in $V'$.

\section{Graphs with controlled cycles}

Let $C$ be a cycle in a connected graph $\Gamma$. We say that a path $\gamma$ {\it intersects} $C$ if they have at least one common edge, i.e. intersect as sets of edges, $\gamma\cap C\neq \emptyset$. 
We say that two paths, $\gamma$ and $\delta$, {\it diverge on $C$} if  they both intersect $C$, and $(\gamma\cap C)\sqcup(\delta\cap C)=C$, where $\sqcup$ denotes the disjoint union of edges.  
We write $\gamma_C$ for the path that coincides with $\gamma$ on $\gamma\setminus C$, and then goes through $C\setminus\gamma$ instead of $C\cap\gamma$. 
Note that $\gamma$ and $\gamma_C$ diverge on $C$.
The picture below illustrates these notions.

\begin{center}

\begin{tikzpicture}[scale=0.8]

\draw (2,0) circle (1cm); 
\draw[dotted] (3.1,0) arc (0:180:1.1cm);
\draw[dashed] (0.9,0) arc (180:360:1.1cm);

\node at (2,0) {$C$};

\node[circle, fill=black, inner sep=1.5pt] at (0.95,0) {};
\node[circle, fill=black, inner sep=1.5pt] at (3.05,0) {};

\node[above] at (0,0.6) {$\gamma$};
\node[below] at (0,-0.6) {$\delta$};

\draw[dotted] (0,0.6) -- (0.9,0);
\draw[dashed] (0,-0.6) --  (0.9,0);

\draw[dotted] (3.1,0.05) -- (4,0.05);
\draw[dashed] (3.1,-0.05) -- (4,-0.05);
 
\node[above] at (4,0.05) {$\gamma$};
\node[below] at (4,-0.05) {$\delta$};

\node[above] at (2,1.1) {$\gamma$};
\node[below] at (2,-1.1) {$\delta$};

%%%%%%%%%%%%%%%%%%%%%%%%%%%%%%%%%%%%%%%

\draw (9,0) circle (1cm); 
\draw[dotted] (10.1,0) arc (0:180:1.1cm);
\draw[dashed] (7.9,0) arc (180:360:1.1cm);

\node at (9,0) {$C$};

\node[circle, fill=black, inner sep=1.5pt] at (7.95,0) {};
\node[circle, fill=black, inner sep=1.5pt] at (10.05,0) {};

\node[above] at (7,0.05) {$\gamma_C$};
\node[below] at (7,-0.05) {$\gamma$};

\draw[dotted] (7,0.05) -- (7.9,0.05);
\draw[dashed] (7,-0.05) --  (7.9,-0.05);

\draw[dotted] (10.1,0.05) -- (11,0.05);
\draw[dashed] (10.1,-0.05) -- (11,-0.05);
 
\node[above] at (11,0.05) {$\gamma_C$};
\node[below] at (11,-0.05) {$\gamma$};

\node[above] at (9,1.1) {$\gamma_C$};
\node[below] at (9,-1.1) {$\gamma$};

\end{tikzpicture}

\end{center}
  
\begin{defn}
Let $\gamma$, $\delta$ be simple paths in $\Gamma$ with the same endpoints. We write $\gamma\sim_n\delta$ if $\gamma$ and $\delta$ do not diverge on any cycle of length $>n$. We call $\Gamma$ a {\it graph with controlled cycles} if $\sim_n$ is an equivalence relation on the set of simple paths of $\Gamma$ with the fixed endpoints for any $n\in\mathbb N$.

\end{defn}

\begin{thm}\label{equiv_rel}
If $\Gamma$ is a cactus graph then $\sim_n$ is an equivalence relation on the set of simple paths of $\Gamma$ with the fixed endpoints for any $n\in\mathbb N$.

\end{thm}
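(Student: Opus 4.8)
The plan is to verify reflexivity and symmetry directly --- these hold for an arbitrary graph --- and to concentrate the role of the cactus hypothesis in a single separation lemma, after which transitivity becomes a short bookkeeping argument, uniform in $n$. Reflexivity holds because $\gamma$ cannot diverge with itself on any cycle $C$: divergence would require $(\gamma\cap C)\sqcup(\gamma\cap C)=C$, but the disjoint union of a nonempty edge set with itself is undefined (the two copies overlap), while $\gamma\cap C\neq\emptyset$ whenever $\gamma$ intersects $C$. Symmetry is immediate from the defining condition. So only transitivity uses that $\Gamma$ is a cactus.

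The key lemma I would prove is that, for a cactus $\Gamma$ and a cycle $C$ in $\Gamma$, any two distinct vertices of $C$ lie in different connected components of $\Gamma\setminus E(C)$. Otherwise a simple path $Q$ inside $\Gamma\setminus E(C)$ would join two vertices of $C$; choosing $w\neq w'$ in $V(C)$ that are consecutive along $Q$ among the $C$-vertices on $Q$, the sub-path $Q'$ of $Q$ between them meets $C$ only at $w,w'$ and uses no edge of $C$, so $Q'$ together with an arc $A$ of $C$ from $w$ to $w'$ is a cycle $C'$; moreover $C'\neq C$, since $Q'$ contains no edge of $C$ whereas the complementary arc $C\setminus A$ consists of $C$-edges only. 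But $C'$ and $C$ share the two vertices $w,w'$, contradicting the cactus property. The only care needed is the multi-edge / length-$2$ case, where $Q'$ or $A$ may be a single edge, but $C'$ is still a genuine cycle distinct from $C$ sharing two vertices with it, so the contradiction persists.

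Two consequences follow. First, a simple path $\gamma$ meets any cycle $C$ in a single arc of $C$: were $\gamma\cap C$ to have two maximal runs, the portion of $\gamma$ strictly between them would be a path inside $\Gamma\setminus E(C)$ joining two distinct vertices of $C$ (distinctness being forced by simplicity of $\gamma$), contradicting the lemma; so $\gamma\cap C$, a connected sub-path made of $C$-edges, is an arc, with distinct entry and exit vertices. Second, fixing endpoints $v_0,v$: if one simple path from $v_0$ to $v$ meets $C$ then all of them do, and each meets $C$ in an arc with the same endpoint pair $p_C\neq q_C$ (where $p_C$ is the unique vertex of $C$ in the component of $v_0$, and $q_C$ the unique vertex of $C$ in the component of $v$, in $\Gamma\setminus E(C)$; uniqueness and non-coincidence come from the lemma). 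Hence, for simple paths $\gamma,\delta$ with endpoints $v_0,v$ that meet $C$, the assertion that $\gamma$ and $\delta$ diverge on $C$ means exactly that they traverse the two distinct arcs of $C$ between $p_C$ and $q_C$.

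Transitivity is then immediate: if $\gamma\sim_n\delta$, $\delta\sim_n\epsilon$, but $\gamma$ and $\epsilon$ diverge on a cycle $C$ of length $>n$, then $\gamma$ and $\epsilon$ meet $C$, so does $\delta$, and $\gamma,\epsilon$ traverse the two distinct arcs of $C$ between $p_C$ and $q_C$; the arc traversed by $\delta$ is one of those two, hence disagrees with $\gamma$'s arc or with $\epsilon$'s arc, so $\gamma$ and $\delta$ diverge on $C$ or $\delta$ and $\epsilon$ do --- contradicting a hypothesis. Thus $\gamma\sim_n\epsilon$, and $\sim_n$ is an equivalence relation. I expect the separation lemma --- in particular making the cactus condition bite in the presence of multiple edges and short cycles --- to be the only genuinely delicate point; everything downstream is routine.
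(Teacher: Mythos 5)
Your proof is correct, but it takes a genuinely different route from the paper's. The paper argues algebraically in the $\mathbb Z/2\mathbb Z$ edge space: since all three paths share endpoints, $\gamma\Delta\delta$ and $\delta\Delta\varepsilon$ are edge-disjoint unions of cycles; in a cactus distinct cycles are edge-disjoint, so a cycle $C$ contained in $\gamma\Delta\varepsilon=(\gamma\Delta\delta)\Delta(\delta\Delta\varepsilon)$ must lie wholly inside one of the two summands, which immediately contradicts $\gamma\sim_n\delta$ or $\delta\sim_n\varepsilon$. That is a three-line argument, at the cost of quietly invoking the decomposition of a symmetric difference of same-endpoint paths into cycles and the (easy) fact that a simple path cannot contain all of $C$. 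You instead prove a separation lemma --- in a cactus, deleting $E(C)$ separates the vertices of $C$ pairwise --- and deduce that every simple path between the fixed endpoints meets $C$, if at all, in one of exactly two complementary arcs between canonical vertices $p_C\neq q_C$, so that divergence on $C$ is precisely the choice of opposite arcs and transitivity is a pigeonhole on two options. Your version is longer and the separation lemma needs the care you give it around multiple edges and $2$-cycles, but it is self-contained, handles explicitly the question of whether the middle path meets $C$ at all (which the paper's algebra absorbs silently), and yields a sharper structural picture: a well-defined ``which arc'' invariant in $\mathbb Z/2\mathbb Z$ for each cycle, which is essentially the structure the paper exploits later when mapping $(\mathbb Z/2\mathbb Z)^m$ into $P(\Gamma)$. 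Both proofs are valid; no gap.
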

\begin{proof}
We have to check transitivity of $\sim_n$.
Suppose that $\gamma\sim_n\delta$ and $\delta\sim_n\e$, but $\gamma\nsim_n\e$. Then there exists a cycle $C$ of length $>n$ such that $C=(\gamma\cap C)\sqcup(\e\cap C)$. This implies that $\gamma\cap\e\cap C=\emptyset$ (i.e. $\gamma$, $\delta$ and $C$ have no common edges), therefore, $C\subset\gamma\Delta\e$. As $\gamma\Delta\e=(\gamma\Delta\delta)\Delta(\delta\Delta\e)$, we have that $C\subset (\gamma\Delta\delta)\Delta(\delta\Delta\e)$. 

Note that, as all paths have the same endpoints, both $\gamma\Delta\delta$ and $\delta\Delta\e$ consist of some cycles, and as all cycles are isolated, these cycles have no common edges, hence each of these cycles, including $C$, should lie either in $\gamma\Delta\delta$ or in $\delta\Delta\e$. If $C\subset\gamma\Delta\delta$ then $\gamma\nsim_n\delta$, and if $C\subset\delta\Delta\e$ then $\delta\nsim_n\e$. This contradiction finishes the proof.
\end{proof}

\begin{example}
Here is a graph that is not a graph with controlled cycles.

\begin{center}

\begin{tikzpicture}[scale=1.2]

\node[circle, fill=black, inner sep=1pt] at (1,0) {};
\node[circle, fill=black, inner sep=1pt] at (1,0.6) {};

\node[circle, fill=black, inner sep=1pt] at (2,0) {};
\node[circle, fill=black, inner sep=1pt] at (2,0.6) {};

\node[circle, fill=black, inner sep=1pt] at (3,0) {};
\node[circle, fill=black, inner sep=1pt] at (3,0.6) {};

\draw (1,0) -- (1,0.6); 
\draw (2,0) -- (2,0.6); 
\draw (3,0) -- (3,0.6); 

\draw (0.5,0) -- (1,0);
\draw (1,0) -- (2,0);
\draw (2,0) -- (3,0);
\draw (3,0) -- (3.5,0);

\draw (0.5,0.6) -- (1,0.6);
\draw (1,0.6) -- (2,0.6);
\draw (2,0.6) -- (3,0.6);
\draw (3,0.6) -- (3.5,0.6);

\node at (0,0) {$\cdots$}; 
\node at (0,0.6) {$\cdots$}; 
\node at (4,0) {$\cdots$}; 
\node at (4,0.6) {$\cdots$};

\node[above] at (1.5,0.6) {$e_1$};
\node[above] at (2.5,0.6) {$e_2$};

\node[below] at (1.5,0) {$e_6$};
\node[below] at (2.5,0) {$e_7$};

\node at (0.8,0.3) {$e_3$};
\node at (1.8,0.3) {$e_4$};
\node at (2.8,0.3) {$e_5$};

\end{tikzpicture}

\end{center}

\noindent
Consider the paths $\gamma=(e_1,e_2,e_5)$, $\delta=(e_1,e_4,e_7)$ and $\e=(e_3,e_6,e_7)$. Then $\gamma$ and $\delta$, and also $\delta$ and $\e$, diverge on a cycle of length 4, and do not diverge on the cycle of length 8, while $\gamma$ and $\e$ diverge on a cycle of length 8.

\end{example}

In graphs with controlled cycles, cycles may have common edges, but if the common part $C_1\cap C_2$ of $C_1$ and $C_2$ is connected, and, hence, $C_3=C_1\Delta C_2$ is a cycle then the two greatest numbers among $|C_1|$, $|C_2|$ and $|C_3|$ should coincide.

\begin{example}\label{example}
Let $\mathbf k=(k_i)_{i=0}^\infty$ be a sequence of integers with $0=k_0<k_1<k_2<\cdots$, and let $l\in\mathbb N$, $l\geq 2$. Let $\Gamma_{\mathbf k,l}$ be a planar graph with the vertices $(k_i,0)$, $i\in\mathbb N$, and $(n,j)$, where $n\neq k_0,k_1,k_2,\ldots$, $j=0,1,2,\ldots,l$. If $k_{i+1}=k_i+1$ then connect $(k_i,0)$ and $(k_{i+1},0)$ by $l$ edges. Otherwise, connect $(k_i,0)$ with each $(k_i+1,j)$ by one edge, and connect $(k_{i+1},0)$ with each $(k_{i+1},0)$ with each $(k_{i+1}-1,j)$ by one edge, $j=0,1,2,\ldots,l$. Also if $k_i<n,n+1<k_{i+1}$ then connect $(n,j)$ and $(n+1,j)$ by one edge for each $j=0,1,2,\ldots,l$. The graphs $\Gamma_{\mathbf k,l}$ have isolated cycles for any $\mathbf k,l$. Here is the picture of $\Gamma_{\mathbf k,l}$ for $\mathbf k=(0,1,3,7,8,\ldots)$ and $l=3$:

\begin{center}

\begin{tikzpicture}[scale=1.2]

% Main horizontal path vertices (with small balls)
\foreach \x in {0,1,...,8} {
    \node[circle, fill=black, inner sep=1pt] at (\x,0) {};
}
% Draw horizontal path edges
\foreach \x in {0,1,...,8} {
    \draw (\x,0) -- (\x+1,0);
}

% Upper vertices and connections 
\node[circle, fill=black, inner sep=1pt] at (2,0.6) {};
\node[circle, fill=black, inner sep=1pt] at (4,0.6) {};
\node[circle, fill=black, inner sep=1pt] at (5,0.6) {};
\node[circle, fill=black, inner sep=1pt] at (6,0.6) {};

% Lower vertices and connections 
\node[circle, fill=black, inner sep=1pt] at (2,-0.6) {};
\node[circle, fill=black, inner sep=1pt] at (4,-0.6) {};
\node[circle, fill=black, inner sep=1pt] at (5,-0.6) {};
\node[circle, fill=black, inner sep=1pt] at (6,-0.6) {};

% Connections for (2,±0.6)
\draw (1,0) to[out=50, in=180, looseness=1] (2,0.6); 
\draw (2,0.6) to[out=0, in=-50, looseness=1]  (3,0);
\draw (1,0) to[out=-50, in=-180, looseness=1]  (2,-0.6);
\draw (2,-0.6) to[out=0, in=50, looseness=1]  (3,0);

% Connections for (4,±0.6) from (3,0)
\draw (3,0)  to[out=50, in=180, looseness=1] (4,0.6);
\draw (3,0)  to[out=-50, in=-180, looseness=1]  (4,-0.6);

% Connections for (6,±0.6) from (7,0)
\draw (6,0.6)  to[out=0, in=-50, looseness=1] (7,0);
\draw (6,-0.6)  to[out=0, in=50, looseness=1] (7,0);

% Upper horizontal connections
\draw (4,0.6) -- (5,0.6);
\draw (5,0.6) -- (6,0.6);

% Lower horizontal connections
\draw (4,-0.6) -- (5,-0.6);
\draw (5,-0.6) -- (6,-0.6);

% Additional edges around (0,0)-(1,0) 
\draw (0,0) to[out=-80, in=-100, looseness=2] (1,0); 
\draw (0,0) to[out=80, in=100, looseness=2] (1,0);  

% Additional edges around (7,0)-(8,0)  
\draw (7,0) to[out=-80, in=-100, looseness=2] (8,0); 
\draw (7,0) to[out=80, in=100, looseness=2] (8,0);  

% Infinite edges from (9,0) with dots 
\draw (8,0) -- (9,0); 
\node at (9.5,0) {$\cdots$}; 

\draw (8,0) to[out=45, in=180] (9,0.6); 
\node at (9.5,0.6) {$\cdots$}; 

\draw (8,0) to[out=-45, in=180] (9,-0.6); 
\node at (9.5,-0.6) {$\cdots$}; 

% Add labels at vertices with 4 edges
\node[below] at (0,0) {$k_0$};
\node[below] at (1,0) {$k_1$};
\node[below] at (3,0) {$k_2$};
\node[below] at (7,0) {$k_3$};
\node[below] at (8,0) {$k_4$};

\end{tikzpicture}

\end{center}

The graph $\Gamma_{\mathbf k,l}$ is a cactus graph only when $l=2$. For $l>2$ it is a graph with controlled cycles.

\end{example}

\begin{example}\label{example3}

Another example of graphs with controlled cycles is here. The length of each of the cycles sharing some common edges is the same. 

\begin{center}
\begin{tikzpicture}

\node[circle, fill=black, inner sep=1pt] at (-0.4,0) {};
\node[circle, fill=black, inner sep=1pt] at (0.5,0) {};
\node[circle, fill=black, inner sep=1pt] at (1.1,0) {};
\node[circle, fill=black, inner sep=1pt] at (2,0) {};
\node[circle, fill=black, inner sep=1pt] at (0.8,0.8) {};
\node[circle, fill=black, inner sep=1pt] at (0.8,-0.8) {};
\node[circle, fill=black, inner sep=1pt] at (3,0.8) {};
\node[circle, fill=black, inner sep=1pt] at (3,-0.8) {};
%\node[circle, fill=black, inner sep=1pt] at (3,0) {};
\node[circle, fill=black, inner sep=1pt] at (4,1.3) {};
\node[circle, fill=black, inner sep=1pt] at (4,0.65) {};
\node[circle, fill=black, inner sep=1pt] at (4,0) {};
\node[circle, fill=black, inner sep=1pt] at (4,-0.65) {};
\node[circle, fill=black, inner sep=1pt] at (4,-1.3) {};
\node[circle, fill=black, inner sep=1pt] at (5,0.8) {};
\node[circle, fill=black, inner sep=1pt] at (5,-0.8) {};
\node[circle, fill=black, inner sep=1pt] at (6,0) {};
\node[circle, fill=black, inner sep=1pt] at (7,0.8) {};
\node[circle, fill=black, inner sep=1pt] at (7,-0.8) {};
\node[circle, fill=black, inner sep=1pt] at (8,0.3) {};
\node[circle, fill=black, inner sep=1pt] at (8,0.9) {};
\node[circle, fill=black, inner sep=1pt] at (8,1.5) {};
\node[circle, fill=black, inner sep=1pt] at (8,-0.3) {};
\node[circle, fill=black, inner sep=1pt] at (8,-0.9) {};
\node[circle, fill=black, inner sep=1pt] at (8,-1.5) {};
\node[circle, fill=black, inner sep=1pt] at (9,1) {};
\node[circle, fill=black, inner sep=1pt] at (9,-1) {};

\draw (-0.4,0) -- (0.8,0.8);
\draw (-0.4,0) -- (0.8,-0.8);
\draw (1.1,0) -- (0.8,0.8);
\draw (1.1,0) -- (0.8,-0.8);
\draw (0.8,0.8) -- (2,0);
\draw (0.8,-0.8) -- (2,0);
\draw (0.5,0) -- (0.8,0.8);
\draw (0.5,0) -- (0.8,-0.8);

\draw (2,0) -- (3,0.8);
\draw (3,0.8) -- (4,1.3);
%\draw (3,0.8) -- (3,-0.8);
\draw (2,0) -- (3,-0.8);
\draw (3,-0.8) -- (4,-1.3);
\draw (4,-1.3) -- (4,1.3);
\draw (4,1.3) -- (5,0.8);
\draw (5,0.8) -- (6,0);
\draw (4,-1.3) -- (5,-0.8);
\draw (5,-0.8) -- (6,0);
\draw (6,0) -- (7,0.8);
\draw (6,0) -- (7,-0.8);
\draw (7,0.8) -- (8,0.3);
\draw (7,0.8) -- (8,1.5);
\draw (7,-0.8) -- (8,-1.5);
\draw (7,-0.8) -- (8,-0.3);
\draw (8,0.3) -- (8,1.5);
\draw (8,-0.3) -- (8,-1.5);
\draw (8,1.5) -- (9,1);
\draw (8,0.3) -- (9,1);
\draw (8,-0.3) -- (9,-1);
\draw (8,-1.5) -- (9,-1);
\draw (9,1) -- (10,1);
\draw (9,-1) -- (10,-1);

\node at (10.5,1) {$\cdots$};
\node at (10.5,-1) {$\cdots$};

\end{tikzpicture}

\end{center}
\end{example}

\begin{example}\label{example4}
Here is one more example of a graph with controlled cycles.

\begin{center}
\begin{tikzpicture}

\node[circle, fill=black, inner sep=1pt] at (0,0) {};
\node[circle, fill=black, inner sep=1pt] at (1,0) {};
\node[circle, fill=black, inner sep=1pt] at (1,0.4) {};
\node[circle, fill=black, inner sep=1pt] at (1,-0.4) {};
\node[circle, fill=black, inner sep=1pt] at (2,0) {};
\node[circle, fill=black, inner sep=1pt] at (2,0.4) {};
\node[circle, fill=black, inner sep=1pt] at (2,0.8) {};
\node[circle, fill=black, inner sep=1pt] at (2,-0.4) {};
\node[circle, fill=black, inner sep=1pt] at (2,-0.8) {};
\node[circle, fill=black, inner sep=1pt] at (3,0) {};
\node[circle, fill=black, inner sep=1pt] at (3,0.4) {};
\node[circle, fill=black, inner sep=1pt] at (3,0.8) {};
\node[circle, fill=black, inner sep=1pt] at (3,1.2) {};
\node[circle, fill=black, inner sep=1pt] at (3,-0.4) {};
\node[circle, fill=black, inner sep=1pt] at (3,-0.8) {};
\node[circle, fill=black, inner sep=1pt] at (3,-1.2) {};

\draw (0,0) -- (3.5,1.4);
\draw (0,0) -- (3.5,-1.4);
\draw (1,-0.4) -- (1,0.4);
\draw (2,-0.8) -- (2,0.8);
\draw (3,-1.2) -- (3,1.2);

\node at (3.8,0) {$\cdots$};

\end{tikzpicture}

\end{center}
\end{example}

%$\langle$subgraph$\rangle$

A subgraph of $\Gamma$ is {\it $\Theta$-shaped} if there exist two vertices $v_1,v_2\in\Gamma$ and three simple paths $\gamma_1$, $\gamma_2$, $\gamma_3$ connecting $v_1$ with $v_2$ such that any two these paths have no common vertices, except $v_1$ and $v_2$. 
%A characterization of graphs with controlled cycles is given by the following statement. 

\begin{lem}
If $\Gamma$ is a graph with controlled cycles then
for any $\Theta$-shaped subgraph of $\Gamma$, the lengths of the two shortest paths out of three are equal.

\end{lem}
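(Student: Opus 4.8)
The plan is to play the three arcs of the $\Theta$-shaped subgraph against each other using the transitivity of the relations $\sim_n$, which is available precisely because $\Gamma$ has controlled cycles.

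First I would fix notation: let $v_1,v_2$ and the pairwise internally disjoint simple paths $\gamma_1,\gamma_2,\gamma_3$ connecting $v_1$ with $v_2$ be as in the definition of a $\Theta$-shaped subgraph, numbered so that $|\gamma_1|\le|\gamma_2|\le|\gamma_3|$; we may assume $v_1\ne v_2$, otherwise all three paths are trivial and there is nothing to prove. I would then record two elementary observations. (i) For $i\ne j$ the paths $\gamma_i$ and $\gamma_j$ are edge-disjoint: a shared edge would join two shared vertices, hence would connect $v_1$ and $v_2$, and a simple path from $v_1$ to $v_2$ traversing such an edge must coincide with it, which is incompatible with the three arcs being distinct (in the only residual case the $\gamma_i$ are pairwise parallel $v_1v_2$-edges and are again edge-disjoint). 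Hence $C_{ij}:=\gamma_i\sqcup\gamma_j$ is a cycle of length $|\gamma_i|+|\gamma_j|$. (ii) The paths $\gamma_i$ and $\gamma_j$ diverge on $C_{ij}$, since $(\gamma_i\cap C_{ij})\sqcup(\gamma_j\cap C_{ij})=\gamma_i\sqcup\gamma_j=C_{ij}$.

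The key step is the equivalence: for $i\ne j$, $\gamma_i\sim_n\gamma_j$ holds if and only if $|\gamma_i|+|\gamma_j|\le n$. The forward implication is immediate from (ii): if $|\gamma_i|+|\gamma_j|>n$ then $\gamma_i,\gamma_j$ diverge on the cycle $C_{ij}$ of length $>n$. For the converse, if $\gamma_i$ and $\gamma_j$ diverged on some cycle $C$, then $C=(\gamma_i\cap C)\sqcup(\gamma_j\cap C)\subseteq\gamma_i\cup\gamma_j=C_{ij}$, whence $|C|\le|\gamma_i|+|\gamma_j|\le n$; so no cycle of length $>n$ can witness divergence of $\gamma_i$ and $\gamma_j$, and $\gamma_i\sim_n\gamma_j$.

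Finally I would derive the contradiction. Assume $|\gamma_1|<|\gamma_2|$ and put $n=|\gamma_1|+|\gamma_3|$. By the key step, $\gamma_1\sim_n\gamma_2$ (since $|\gamma_1|+|\gamma_2|\le|\gamma_1|+|\gamma_3|=n$) and $\gamma_1\sim_n\gamma_3$ (since $|\gamma_1|+|\gamma_3|=n$), whereas $\gamma_2\nsim_n\gamma_3$ (since $|\gamma_2|+|\gamma_3|>|\gamma_1|+|\gamma_3|=n$). As $\sim_n$ is an equivalence relation on the simple paths with endpoints $v_1$ and $v_2$, transitivity would force $\gamma_2\sim_n\gamma_3$, a contradiction. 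Hence $|\gamma_1|=|\gamma_2|$, i.e. the two shortest of the three paths have equal length. The only delicate points are the bookkeeping in (i)--(ii), namely that the $\gamma_i$ are edge-disjoint and that $C_{ij}$ is honestly a cycle, together with the choice of $n$, which must keep two of the three pairs $\sim_n$-related while breaking the third; everything else is a one-line invocation of transitivity.
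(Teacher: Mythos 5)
Your proof is correct and follows essentially the same route as the paper: the three arcs give three cycles of lengths $l_i+l_j$, and the controlled-cycles hypothesis forces the two largest of these cycle lengths to coincide, which is exactly the statement that the two shortest arcs have equal length. The paper's version simply invokes its earlier (unproved) observation that for such configurations the two greatest of the three cycle lengths must agree, whereas you derive that step directly from transitivity of $\sim_n$ with the choice $n=|\gamma_1|+|\gamma_3|$, so your write-up is the more self-contained of the two.
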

\begin{proof}
Let $\Gamma_0$ be a $\Theta$-shaped subgraph of $\Gamma$, and let $l_i$ be the length of $\gamma_i$, $i=1,2,3$. Then the lengths of the three cycles obtained from the three paths are $l_1+l_2$, $l_2+l_3$, $l_3+l_1$. The controlled cycles condition requires that the two maximal lengths of these cycles should be equal. Suppose that $l_1+l_2=l_2+l_3\geq l_3+l_1$. Then $l_2\geq l_1=l_3$.
\end{proof}

%\begin{lem}
%For a graph $\Gamma$, the following are equivalent:
%\begin{enumerate}
%\item
%$\Gamma$ is a graph with controlled cycles;
%\item
%for any $\Theta$-shaped subgraph, the lengths of two shortest paths out of three are equal.

%\end{enumerate}

%\end{lem}
%\begin{proof}
%$(1)\Rightarrow(2):$ Let $\Gamma_0$ be a $\Theta$-shaped subgraph of $\Gamma$, and let $l_i$ be the length of $\gamma_i$, $i=1,2,3$. Then the lengths of the three cycles obtained from the three paths are $l_1+l_2$, $l_2+l_3$, $l_3+l_1$. The controlled cycles condition requires that the two maximal lengths of these cycles should be equal. Suppose that $l_1+l_2=l_2+l_3\geq l_3+l_1$. Then $l_2\geq l_1=l_3$.

%$(2)\Rightarrow(1):$ Given three paths $\delta_i$, $i=1,2,3$, with the same endpoints, there are cycles, on which these paths pairwise diverge. If these cycles have only common vertices then the argument for the cactus graphs works. If there is a common edge between a cycle $C_1$ (where $\delta_2$ and $\delta_3$ diverge) and a cycle $C_2$ (where $\delta_1$ and $\delta_3$ diverge) then $\delta_1$ and $\delta_3$ diverge on the cycle $C_3=C_1\Delta C_2$. These cycles form a $\Theta$-shaped subgraph. By (2), the two maximal lengths out of $|C_i|$, $i=1,2,3$, should be equal, hence transitivity of $\sim_n$, $n\in\mathbb N$, follows. 
%\end{proof}

It would be interesting to find a graph-theoretic characterization of the class of graphs with controlled cycles.

\section{A metric on simple paths} 

Let $\gamma=(e_1,\ldots,e_n),\delta=(e'_1,\ldots,e'_m)\in S(\Gamma)$. Define $d(\gamma,\delta)=|\gamma\Delta\delta|$ as the number of edges in only one of the two paths.

Let $\Z$ denote the group of two elements, $0$ and $1$, with the standard metric $d(x,y)=|x-y|$, $x,y\in\Z$. We write $(\Z)^X$ for the set of maps on a set $X$ with values in $\Z$ with finite support, i.e. equal to 0 at all but finitely many points of $X$. For $a,b:X\to\Z$ set $\rho(a,b)=\sum_{x\in X}|a(x)-b(x)|$. If $a,b\in(\Z)^X$ then this sum is finite, and defines the Hamming metric on $(\Z)^X$ \cite{Hamming}. Note that $\rho$ is not proper, i.e. balls are not finite. It is known that for any proper left-invariant metric, $(\Z)^\infty$ has property A (\cite{Willett}, Proposition 2.3.3), but not for $\rho$, as $(\Z)^\infty$ contains an isometric copy of $\sqcup_{n\in\mathbb N}(\Z)^n$. 

%Fix an orientation on $\Gamma$, and let $\widetilde{E}$ be the set of edges together with orientation, i.e. $\widetilde{E}=\{(e,+),(e,-):e\in E\}$.

For $\gamma=(e_1,\ldots,e_n)\in S(\Gamma)$, $e_1,\ldots,e_n\in E$, set 
$$
f(\gamma)(e)=\left\lbrace\begin{array}{cl}1 &\mbox{if\ }e\in\{e_1,\ldots,e_n\};\\0&\mbox{otherwise.}\end{array}\right.
$$ 
This gives a map $f:S(\Gamma)\to(\Z)^E$, which is injective by Lemma \ref{inject}. 

\begin{lem}
The map $f$ satisfies $d(\gamma,\delta)=\rho(f(\gamma),f(\delta))$ for any $\gamma,\delta\in S(\Gamma)$.

%The map $f$ is an isometry. 

\end{lem}\label{lem0}
\begin{proof}
Let $\gamma,\delta\in S(\Gamma)$. 
If $e\in\gamma\cap\delta$ or $e\notin\gamma\cup\delta$ then $f(\gamma)(e)=f(\delta)(e)$, if $e\in\gamma\Delta\delta$ then $|f(\gamma)(e)-f(\delta)(e)|=1$. Summing up over all $e\in E$, we obtain $d(\gamma,\delta)=\rho(f(\gamma),f(\delta))$. 
\end{proof}

Lemma \ref{lem0} shows that $d$ is a metric on $S(\Gamma)$ and on any of its subsets, and that
we may think of $S(\Gamma)$ as a subset of $(\Z)^\infty$.

Let $P(\Gamma)=\sqcup_{v\in V}P_v(\Gamma)$ be the coarse union of these spaces with the metric $d$ on each component. Note that as the metric $d$ takes integer values, $S(\Gamma)$ and $P(\Gamma)$ are uniformly discrete.

\begin{example}
Let $\mathbf k=(1,2,3,\ldots)$, $l=2$. In the graph $\Gamma=\Gamma_{\mathbf k,2}$ from Example \ref{example}, each path from the root to a vertex $v_n$ such that $d_\Gamma(v_0,v_n)=n$ is determined by a 0-1 sequence of length $n$, as from each vertex the path may go to the right either by the upper, or by the lower edge (and, being simple, cannot go back to the left). Thus, there is a one-to-one correspondence $f_n:P_{v_n}(\Gamma)\to (\Z)^n$.
\begin{center}
\begin{tikzpicture}

\foreach \x in {0,1,...,4} {
    \node[circle, fill=black, inner sep=1pt] at (\x,0) {};
}
\foreach \x in {0,1,...,3} {
    \draw (\x,0) to[out=-80, in=-100, looseness=1] (\x+1,0);
}

\foreach \x in {0,1,...,3} {
    \draw (\x,0) to[out=80, in=100, looseness=1] (\x+1,0);
}

\node at (4.5,0) {$\cdots$}; 
\node at (-0.3,0) {$v_0$};

\end{tikzpicture}

\end{center}
As the length of each cycle is 2, one has $d(\gamma,\delta)=2\rho(f_n(\gamma),f_n(\delta))$ for any $\gamma,\delta\in P_{v_n}(\Gamma)$, hence $P(\Gamma)$ and $\sqcup_{n\in\mathbb N}(\Z)^n$ are isometric up to a constant factor.   

\end{example}

\begin{cor}
$S(\Gamma)$ and $P(\Gamma)$ are coarsely embeddable in a Hilbert space.

\end{cor}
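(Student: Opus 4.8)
The plan is to exhibit, in each case, a map into a Hilbert space whose displacement is exactly the square root of the metric; since $t\mapsto\sqrt t$ is unbounded, any such map is automatically a coarse embedding, with lower and upper control functions both equal to $t\mapsto\sqrt t$. I note at the outset that this argument is insensitive to the controlled-cycles hypothesis: it applies to every rooted connected graph $\Gamma$.

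For $S(\Gamma)$ I would compose the injection $f\colon S(\Gamma)\to(\Z)^E$ of Lemma \ref{lem0} with the inclusion $\Phi\colon(\Z)^E\to\ell_2(E)$ that views a finitely supported $\{0,1\}$-valued function as a real-valued element of $\ell_2(E)$. Because every coordinate of $f(\gamma)-f(\delta)$ lies in $\{0,1\}$, squaring leaves it unchanged, so $\|\Phi f(\gamma)-\Phi f(\delta)\|_{\ell_2}^2=\rho(f(\gamma),f(\delta))$, which equals $d(\gamma,\delta)$ by Lemma \ref{lem0}. Hence $\|\Phi f(\gamma)-\Phi f(\delta)\|=\sqrt{d(\gamma,\delta)}$ and $\Phi f$ is the desired coarse embedding. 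This is just the classical fact that the Hamming metric admits a square root realized in a Hilbert space.

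For $P(\Gamma)=\sqcup_{v\in V}P_v(\Gamma)$ I would run the same idea across the coarse disjoint union, adding one coordinate to separate the components. Enumerate $V$, fix base points $x_v\in P_v(\Gamma)$, and pick an increasing sequence $R_v\to\infty$. By the first lemma of the paper, coarse embeddability may be tested on any representative of the coarse-disjoint-union metric, so I take the one with $\alpha_{vw}=R_v+R_w$; this satisfies (\ref{e2}) and, since $\alpha_{uv}+\alpha_{vw}\ge\alpha_{uw}$, yields a genuine metric. Put $H=\big(\bigoplus_{v\in V}\ell_2(E)\big)\oplus\ell_2(V)$ and, for $\gamma\in P_v$, set $\Psi(\gamma)=\big(f(\gamma)-f(x_v)\big)\oplus\sqrt{R_v}\,u_v$, the first summand sitting in the $v$-th copy of $\ell_2(E)$ and $u_v$ being the standard basis vector of $\ell_2(V)$ at $v$. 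For $\gamma,\delta$ in a common $P_v$ the last coordinate cancels and $\|\Psi(\gamma)-\Psi(\delta)\|^2=d(\gamma,\delta)$; for $\gamma\in P_v$, $\delta\in P_w$ with $v\ne w$ the two $\ell_2(E)$-summands are orthogonal, giving $\|\Psi(\gamma)-\Psi(\delta)\|^2=d(\gamma,x_v)+d(\delta,x_w)+R_v+R_w$, which is precisely the disjoint-union distance $d(\gamma,\delta)$. Thus $\|\Psi(\gamma)-\Psi(\delta)\|=\sqrt{d(\gamma,\delta)}$ throughout, and $\Psi$ is a coarse embedding.

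The only delicate point is matching the Hilbert-space displacement to the disjoint-union metric across distinct components; this is exactly what the extra weighted coordinate $\sqrt{R_v}\,u_v$ and the freedom to choose $\alpha_{vw}=R_v+R_w$ accomplish. Everything else rests on the elementary identity $\|a-b\|_{\ell_2}^2=\rho(a,b)$ for $\{0,1\}$-valued $a,b$, so I expect no real obstacle beyond this bookkeeping.
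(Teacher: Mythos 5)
Your proof is correct, and it takes a more explicit and self-contained route than the paper's. The paper simply observes that $f$ identifies $S(\Gamma)$ with a set of $0$-$1$ sequences in $l_1$ and then cites the general theorem that $l_1$ coarsely embeds into Hilbert space (Example 5.2.3 and Theorem 5.3.1 of Nowak--Yu), adding without further detail that subsets and their coarse unions inherit coarse embeddability. You instead prove the key fact from scratch: for finitely supported $\{0,1\}$-valued functions the inclusion into $\ell_2(E)$ satisfies $\|a-b\|_{\ell_2}^2=\rho(a,b)$, so $\Phi\circ f$ realizes exactly the square root of $d$, which is all a coarse embedding requires; this bypasses the less elementary negative-type-kernel argument underlying the general $l_1$ theorem, at the cost of working only for $0$-$1$ sequences (which is all that is needed here). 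You also spell out what the paper leaves implicit for $P(\Gamma)$: since the coarse disjoint union is well defined only up to coarse equivalence (the paper's first lemma), you may choose the representative with $\alpha_{vw}=R_v+R_w$, and the extra weighted coordinate $\sqrt{R_v}\,u_v$ together with the orthogonality of the component summands makes $\Psi$ again an exact square-root embedding across components. Both arguments are sound; yours is quantitatively sharper and fully explicit, while the paper's is shorter because it leans on the cited general theorem and on the (true but unproved there) stability of coarse embeddability under coarse unions.
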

\begin{proof}
%For $x=(x_i)_{\in \mathbb N}\in(\Z)^\infty$ set $h(x)_i=x_i$. Then $h$ maps $(\Z)^\infty$ to $l_2$ and satisfies $\sqrt{\rho(x,y)}\|h(x)-h(y)\|$, $x,y\in(\Z)^\infty$, so $(\Z)^\infty$ is coarsely embeddable in $l_2$, and the same holds for the coarse union of its subsets.
The space $(\Z)^\infty$ is is isometric to the 0-1 sequences in $l_1$, which is coarsely embeddable in a Hilbert space (Example 5.2.3 and Theorem 5.3.1 in \cite{Nowak-Yu}), hence the same holds for its subsets and for their coarse unions.
\end{proof}

\begin{remark}
Example \ref{example4} shows that the sets $P_v(\Gamma)$ can be infinite. But if $\Gamma$ is a cactus graph then $P_v(\Gamma)$ is finite for any $v\in V(\Gamma)$.  

\end{remark}

\begin{remark}
The set $S(\Gamma)$ has a natural structure of a tree: its vertices are simple paths, and two simple paths are connected by an edge if they differ by an edge of $\Gamma$. But the graph metric of this tree differs from the metric $d$: some close points with respect to $d$ may be far with respect to the graph metric in this tree. 

\end{remark}

\section{Criterion for bounded geometry}

\begin{thm}\label{BG}
Let $\Gamma$ be a rooted graph with controlled cycles. % with uniformly bounded degree. 
Then the following are equivalent:
\begin{enumerate}
\item
the metric space $P(\Gamma)$ is of bounded geometry; 
\item
for any $n\in\mathbb N$ there there exists $m\in\mathbb N$ such that any $\gamma\in P$ intersects not more than $m$ cycles $C_1,\ldots,C_m$ of length $\leq n$. 
\end{enumerate}

\end{thm}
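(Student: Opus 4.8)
The plan is to reduce bounded geometry to a per-component ball estimate and then match the size of such a ball with the number of short cycles along which a path can be rerouted. The common structural fact underlying both directions is the following. If $\gamma,\delta\in P_v(\Gamma)$, then, since both have the same endpoints, every vertex has even degree in the edge set $\gamma\Delta\delta$, so $\gamma\Delta\delta$ decomposes into edge-disjoint simple cycles. Each such cycle $C$ cannot be contained in the simple path $\delta$ (a simple path contains no cycle), hence $C\cap\gamma\neq\emptyset$ and $C\cap\delta\neq\emptyset$; as $C\subseteq\gamma\Delta\delta$ these two parts partition $C$, so $\gamma$ and $\delta$ \emph{diverge} on $C$. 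Conversely $\delta=\gamma\Delta(\gamma\Delta\delta)$, so a path near $\gamma$ is exactly $\gamma$ with a collection of divergent cycles flipped, and $d(\gamma,\delta)$ is the total length of these cycles. I would record this as a preliminary lemma.

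Next I would reduce bounded geometry of $P(\Gamma)=\sqcup_v P_v(\Gamma)$ to a bound uniform over the components. Because the constants $\alpha_{vw}$ of the coarse disjoint union tend to infinity, the set of pairs with $\alpha_{vw}\le r$ is finite; hence for each $r$ a ball $B_r(\gamma)$ meets at most a fixed number $N(r)$ of components other than the one containing $\gamma$, and inside each such component it is contained in the $r$-ball about the basepoint. Consequently $P(\Gamma)$ has bounded geometry if and only if for every $r$ the quantity $|B_r(\gamma)\cap P_v(\Gamma)|$ is bounded uniformly in $v$ and in $\gamma\in P_v(\Gamma)$. I would prove this as a short coarse-union lemma and work with within-component balls from then on.

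The implication $(2)\Rightarrow(1)$ is then immediate from the counting. If $\delta\in B_r(\gamma)\cap P_v(\Gamma)$, the structural fact shows $\gamma\Delta\delta$ is a union of cycles of length $\le r$, each intersected by $\gamma$; by $(2)$ there are at most $m=m(r)$ such cycles, so $\gamma\Delta\delta$ lies in a fixed edge set of size at most $mr$. Since $\delta\mapsto\gamma\Delta\delta$ is injective, $|B_r(\gamma)\cap P_v(\Gamma)|\le 2^{mr}$ uniformly, and $(1)$ follows from the reduction.

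The implication $(1)\Rightarrow(2)$ I would prove by contraposition, and this is where I expect the real work. Every cycle $C$ of length $\le n$ on which $\gamma$ diverges produces a distinct neighbour $\gamma_C=\gamma\Delta C$ with $d(\gamma,\gamma_C)=|C|\le n$, so $|B_{n+1}(\gamma)\cap P_v(\Gamma)|$ is at least one plus the number of such divergent cycles; it therefore suffices to show that if $\gamma$ intersects arbitrarily many cycles of length $\le n$ then it diverges on arbitrarily many. The subtlety is that intersecting a short cycle is strictly weaker than diverging on it: flipping a cycle that $\gamma$ meets in two separate arcs can split off a loop and fail to produce a simple path. The \textbf{main obstacle} is thus to upgrade ``$\gamma$ intersects many short cycles'' to ``$\gamma$ diverges on many short cycles.'' Here I would invoke the controlled-cycles hypothesis through the earlier lemma on $\Theta$-shaped subgraphs and the observation that when two cycles share a connected arc the two largest of the three associated lengths coincide; these constraints force the short cycles met by $\gamma$ to be organized so that a positive proportion of them are genuinely divergent (replacing, where necessary, a cycle met in several arcs by a divergent cycle of length $\le n$), with distinct cycles giving distinct neighbours. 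This would yield a function $\phi$ with $\phi(M)\to\infty$ such that intersecting $M$ short cycles forces at least $\phi(M)$ divergences, hence balls of unbounded size, contradicting $(1)$.
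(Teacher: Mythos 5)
Your proposal reproduces the paper's argument in all of its load-bearing steps: the observation that $\gamma\Delta\delta$ has even degrees and hence decomposes into edge-disjoint cycles on each of which $\gamma$ and $\delta$ diverge, the resulting bound $|B_r(\gamma)\cap P_v(\Gamma)|\le 2^{mr}$ for $(2)\Rightarrow(1)$, and, for $(1)\Rightarrow(2)$, the production of many distinct neighbours of $\gamma$ by flipping the short cycles it meets. Your preliminary reduction of bounded geometry of the coarse disjoint union to a uniform within-component ball estimate is left implicit in the paper but is correct and worth making explicit.

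The one genuine divergence is the ``main obstacle'' you isolate in $(1)\Rightarrow(2)$, and there your proof is incomplete. You are right that $\gamma\Delta C$ need not be a simple path when $\gamma$ meets $C$ in more than one arc (it can split into a path plus a disjoint cycle); the paper does not engage with this at all --- it simply writes $\gamma_{C_i}\in P_{v}(\Gamma)$ with $d(\gamma,\gamma_{C_i})\le n$ for \emph{every} intersected cycle, implicitly taking $\gamma_{C_i}$ to be simple. But your proposed repair is only an assertion: the claim that the controlled-cycles hypothesis, via the $\Theta$-subgraph lemma, forces a definite proportion $\phi(M)$ of the $M$ intersected short cycles to be genuinely divergent is exactly the statement that would need a proof, and you construct neither $\phi$ nor the replacement cycles. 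Note that the $\Theta$-lemma does not by itself exclude multi-arc intersections (a path can use one edge of a cycle of length $2k$, leave, and re-enter on the antipodal arc by a long detour without violating the two-shortest-equal condition), so some real combinatorial analysis of which configurations survive the controlled-cycles hypothesis, and of when $\gamma\Delta C$ stays connected, is required. As written, your $(1)\Rightarrow(2)$ stops at the step you yourself identify as the crux; either supply that analysis, or adopt the paper's (unexamined) convention that $\gamma_C\in P_v(\Gamma)$ whenever $\gamma\cap C\neq\emptyset$, in which case the direction reduces to the paper's one-line count.
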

\begin{proof}
$(1)\Rightarrow (2)$. Suppose that $P(\Gamma)$ is of bounded geometry, and assume that there exists $n\in\mathbb N$ such that for any $m$ there exists $\gamma\in P(\Gamma)$ such that it intersects at least $m$ cycles $C_1,\ldots,C_m$ of length $\leq n$ each. Then $d(\gamma,\gamma_{C_i})\leq n$, so $\gamma_{C_1},\ldots,\gamma_{C_m}\in B_n(\gamma)$, i.e. $|B_n(\gamma)|\geq m$. As $m$ is arbitrary, this contradicts bounded geometry of $P(\Gamma)$.  

$(2)\Rightarrow (1)$. Let $n\in\mathbb N$. If (2) holds then there exists $m\in\mathbb Z$ such that any $\gamma\in P(\Gamma)$ intersects not more than $m$ cycles $C_i$ of length $\leq n$. Let $\gamma\in P_v(\Gamma)$ for some $v\in V(\Gamma)$, and let $\delta\in P_v(\Gamma)$ satisfy $d(\delta,\gamma)\leq n$, i.e. $\delta\in B_n(\gamma)\cap P_v(\Gamma)$. Then 
\begin{equation}\label{eq1}
\gamma\Delta\delta=C_{i_1}\cup\ldots\cup C_{i_k} 
\end{equation}
is the union of cycles, as the endpoints of $\gamma$ and $\delta$ coincide. Clearly, the length of each $C_{i_j}$ does not exceed $n$: otherwise it would contradict $d(\delta,\gamma)\leq n$. But there are not more than $m$ such cycles, so the cycles in (\ref{eq1}) belong to the set $C_1,\ldots,C_m$. Then 
$$
|\delta\Delta\gamma|\leq|C_1\cup\ldots\cup C_m|\leq m\cdot\max\nolimits_{i=1}^{m}|C_i|\leq mn. 
$$
Thus, $\delta$ may differ from $\gamma$ on no more than $mn$ edges, hence $|B_n(\gamma)\cap P_v(\Gamma)|\leq 2^{mn}$. 
As $v\in V(\Gamma)$ and $\gamma\in P_v(\Gamma)$ are arbitrary, we see that the size of all balls of radius $n$ in $P(\Gamma)$ is uniformly bounded.  
\end{proof}

\begin{example}\label{example2}
Let $\Gamma_{\mathbf k,l}$ be the graph from Example \ref{example}. Let $\mathbf k_1=(0,1,2,3,\ldots)$, $\mathbf k_2=(0,1,4,9,\ldots)$, $\Gamma_i=\Gamma_{\mathbf k_i,l}$, $i=1,2$. Then $P(\Gamma_1)$ does not have bounded geometry, and $P(\Gamma_2)$ has it. 

\end{example}

\begin{example}
If $\Gamma$ is a graph from Example \ref{example3} then it has bounded geometry iff the lengths of cycles go to infinity. The graph from Example \ref{example4}
has bounded geometry.

\end{example}

\section{Property A}

\begin{thm}\label{Thmain}
Let $\Gamma$ be a rooted graph with controlled cycles. % with uniformly bounded degree. 
Then the following are equivalent:
\begin{enumerate}
\item
$P(\Gamma)$  is of bounded geometry;
\item
$P(\Gamma)$   has property A;
\item
$P(\Gamma)$   satisfies the Higson--Roe condition.
\end{enumerate}

\end{thm}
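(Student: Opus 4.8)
The plan is to close a cycle of implications. The easy links are standard coarse geometry: property A always implies the Higson--Roe condition (stated in Section 1), so $(2)\Rightarrow(3)$ is free. Likewise $(1)\Rightarrow(2)$ is a well-known fact --- every bounded geometry space that coarsely embeds, or more simply, every space coarsely equivalent to a subspace of a space with property A, need not help here; rather, the cleanest route is to note that $P(\Gamma)$ with bounded geometry is a subspace of $(\Z)^\infty$ on which the Hamming metric is \emph{proper} (since balls are finite, by bounded geometry), and invoke the cited fact (\cite{Willett}, Prop. 2.3.3) that $(\Z)^\infty$ with a proper left-invariant metric has property A; property A passes to subspaces. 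Alternatively one can give the direct two-step partition-of-unity argument using that balls of radius $n$ have size $\leq 2^{mn}$ from Theorem \ref{BG}. Either way, $(1)\Rightarrow(2)\Rightarrow(3)$ is routine; the entire content of the theorem is the remaining implication $(3)\Rightarrow(1)$.

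So the real work is: \emph{if $P(\Gamma)$ satisfies the Higson--Roe condition, then $P(\Gamma)$ has bounded geometry}. I would argue by contraposition, using the criterion of Theorem \ref{BG}: suppose $P(\Gamma)$ is \emph{not} of bounded geometry. Then there is an integer $n$ such that for every $m$ there is a path $\gamma=\gamma^{(m)}\in P(\Gamma)$ intersecting $m$ distinct cycles $C_1,\dots,C_m$, each of length $\leq n$. The key structural input from "controlled cycles" is that distinct short cycles met by a single simple path behave almost independently: because $\sim_n$ is an equivalence relation (and by the remark that when $C_i\cap C_j$ is connected the two largest of $|C_i|,|C_j|,|C_i\Delta C_j|$ coincide), one can pass to a sub-collection of these cycles that is "genuinely independent", i.e. for each subset $T\subseteq\{1,\dots,m'\}$ the path $\gamma_T$ obtained from $\gamma$ by flipping it on exactly the cycles $C_i$, $i\in T$, is again a simple path with the same endpoint as $\gamma$, and $d(\gamma_S,\gamma_T)$ is comparable to $\sum_{i\in S\Delta T}|C_i|$. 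This exhibits, inside a single fibre $P_v(\Gamma)$, a bi-Lipschitz-embedded copy of the Hamming cube $(\Z)^{m'}$ of arbitrarily large dimension $m'$ (with Lipschitz constants depending only on $n$, not on $m'$). Since the coarse union $\sqcup_{m'}(\Z)^{m'}$ is exactly Nowak's example without property A --- and, being of bounded geometry in each fibre, property A is equivalent to the Higson--Roe condition for it --- these embedded cubes, sitting coarsely disjointly inside $P(\Gamma)$ (they lie in different fibres $P_{v}(\Gamma)$ which are mutually far in the coarse union metric), obstruct the Higson--Roe condition: a Higson--Roe family for $P(\Gamma)$ would restrict to one for $\sqcup_{m'}(\Z)^{m'}$, contradicting Nowak's computation. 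This also proves the paper's abstract's last claim, that a non-bounded-geometry $P(\Gamma)$ contains a bounded geometry subspace without property A, namely the union of these cubes.

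The main obstacle is the middle step: extracting from $m$ arbitrary short cycles intersected by $\gamma$ a sub-family of size $m'\to\infty$ on which the "flip" operations commute and compose additively in the metric $d$. Two things can go wrong in a general graph with controlled cycles: two of the cycles $C_i,C_j$ may share edges (so flipping both is not simply the symmetric difference), and flipping on $C_i$ might destroy simplicity or change which other cycles the path meets. I expect to handle this by a Ramsey/pigeonhole reduction --- bound the number of cycles that can pairwise interact (using the controlled-cycles constraint on lengths, which caps how cycles of length $\leq n$ can overlap), discard them, and keep a large "pairwise non-interacting" subfamily on which $\gamma\mapsto\gamma_{C_i}$ are independent involutions --- together with the observation that $d(\gamma,\gamma_{C_i})\leq n$ uniformly, so the whole cube has controlled Lipschitz distortion. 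Verifying that the resulting subspace indeed has bounded geometry (so that Higson--Roe $\Rightarrow$ property A applies to it) is then immediate since it is a disjoint union of finite cubes of bounded-per-fibre... no: the cubes grow, but each \emph{fibre} intersection is finite, which is all that "bounded geometry" of the constructed subspace needs once one checks balls of fixed radius meet only boundedly many cube-points --- this uses again the uniform Lipschitz bounds.
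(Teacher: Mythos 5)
Your implication $(3)\Rightarrow(1)$ is essentially the paper's argument (flip $\gamma_m$ on its $m$ short cycles to bi-Lipschitzly embed Nowak's space $\sqcup_{m}(\Z)^m$, then use that the Higson--Roe condition passes to subspaces and fails for Nowak's space), and your worries about overlapping cycles are reasonable refinements of it. But your treatment of $(1)\Rightarrow(2)$ contains a genuine error, and it is precisely the step you dismiss as ``routine'' that carries the content of the controlled-cycles hypothesis. Bounded geometry never implies property A by itself, and neither of your two proposed routes works. The route via Willett's Proposition 2.3.3 fails because the metric on $P(\Gamma)$ is the restriction of the Hamming metric $\rho$ on $(\Z)^\infty$, which is \emph{not} proper; the fact that the restriction to $P(\Gamma)$ happens to have finite balls does not make $P(\Gamma)$ a subspace of $(\Z)^\infty$ equipped with a proper left-invariant metric, and the two metrics on the subset are not coarsely equivalent. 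Indeed the paper's own concluding remark, quoting Ostrovskii, exhibits a bounded geometry subset of $\sqcup_m(\Z)^m\subset((\Z)^\infty,\rho)$ \emph{without} property A, which directly refutes the statement ``bounded geometry subspaces of $((\Z)^\infty,\rho)$ have property A'' that your argument needs. Your fallback, a ``partition-of-unity argument using that balls of radius $n$ have size $\leq 2^{mn}$,'' is likewise hopeless: a bound on ball sizes is just bounded geometry restated and cannot yield property A.

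What the paper actually does for $(1)\Rightarrow(2)$ is where the equivalence relation $\sim_n$ earns its keep: for $\gamma\in P_v(\Gamma)$ one takes $A_\gamma(n)$ to be the set of paths in $P_v(\Gamma)$ agreeing with $\gamma$ off cycles of length $\leq n$. Bounded geometry (via Theorem \ref{BG}) gives $A_\gamma(n)\subset B_{mn}(\gamma)$, i.e.\ the support condition; and if $d(\gamma,\delta)\leq R\leq n$ then $\gamma\sim_n\delta$, so \emph{transitivity} of $\sim_n$ forces $A_\gamma(n)=A_\delta(n)$ exactly, giving property A with $\epsilon=0$. Without the controlled-cycles hypothesis this step collapses, which is consistent with the existence of bounded geometry spaces (even coarsely embeddable ones) without property A. You should rebuild $(1)\Rightarrow(2)$ along these lines; as written, your proposal proves only $(2)\Leftrightarrow(3)\Rightarrow(1)$ and leaves the cycle open.
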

\begin{proof}
$(1)\Rightarrow (2)$ Let $P(\Gamma)$ be of bounded geometry.  
Let $v\in V(\Gamma)$, $\gamma\in P_v(\Gamma)$, $n\in\mathbb N$. Let $A_\gamma(n)\subset P_v(\Gamma)$ be the set of all paths $\delta\in P_v(\Gamma)$ such that $\delta$ coincides with $\gamma$ everywhere except cycles of length $\leq n$. 

As $P(\Gamma)$ is of bounded geometry, by Theorem \ref{BG}, for any $n\in\mathbb N$ there there exists $m\in\mathbb N$ such that any $\gamma\in P_v(\Gamma)$ intersects not more than $m$ cycles $C_1,\ldots,C_m$ of length $\leq n$. If $\e\in A_\gamma(n)$ then $\e\in P_v(\gamma)$, and, hence, coincides with $\gamma$ on cycles of length $\leq n$, hence $d(\gamma,\e)\leq mn$. Thus, $A_\gamma(n)\subset B_{mn}(\gamma)$ for any $\gamma\in P(\Gamma)$, and item (2) of the definition of property A holds with $S=mn$. 

Let $\delta\in P_v(\Gamma)$, $d(\gamma,\delta)\leq R\in\mathbb N$. This means that $\gamma$ and $\delta$ cannot diverge on cycles of length $\geq R$. If $\e\in A_\gamma(n)$ then $\e$ and $\gamma$ do not diverge on cycles of length $>n$. Let $n\geq R$. Then, by transitivity of $\sim_n$, $\delta$ and $\e$ do not diverge on cycles of length $>n$, hence $\e\in A_\delta(n)$, therefore $A_\gamma(n)\Delta A_\delta(n)=\emptyset$, and item (1) of the definition of property A holds for $P(\Gamma)$ with $\epsilon=0$. 

$(2)\Rightarrow (3)$. This is Theorem 4.2.1 in \cite{Nowak-Yu}.

$(3)\Rightarrow(1)$. We shall show the contrapositive: unbounded geometry implies failure of the Higson--Roe condition. Suppose that $P(\Gamma)$ is not of bounded geometry. By Theorem \ref{BG}, there exists $n\in\mathbb N$ such that for any $m\in\mathbb N$ there exists $\gamma_m\in P(\Gamma)$ such that it intersects at least $m$ cycles $C_1,\ldots,C_m$ of length $\leq n$. If $\gamma_m\in P_{v_m}(\Gamma)$ for some $v_m\in V$ then $(\gamma_m)_{C_i}\in P_{v_m}(\Gamma)$, $i=1,\ldots,m$.

Let $X=\sqcup_{m\in\mathbb N}(\Z)^m$ be the coarse union of finite metric spaces $(\Z)^m$ with the Hamming metric $\rho$, and let $x=(x_1,\ldots,x_m)\in(\Z)^m$. 
Let $\delta_x\in P_{v_m}(\Gamma)$ be the path that coincides with $\gamma_m$ outside all the cycles $C_1,\ldots,C_m$, and within these cycles it coincides with $\gamma_m$ in $C_i$ if $x_i=0$, and it passes through the other side of the cycle $C_i$ if $x_i=1$. 
Set $f(x)=\delta_x$. This gives maps $f_m:(\Z)^m\to P_{v_m}(\Gamma)$ and a map $f:X\to P(\Gamma)$. If $x,y\in(\Z)^m$ then $2\rho(x,y)\leq d(f(x),f(y))\leq n\rho(x,y)$, hence $f$ is a coarse equivalence between $(X,\rho)$ and $(f(X),d)\subset P(\Gamma)$ (in fact, a bilipschitz equivalence).   

It was shown in \cite{Nowak} that $X$ not only does not have property A, but also does not satisfy the Higson--Roe condition. As $f(X)$ is coarsely equivalent to $X$, it also does not satisfy the Higson--Roe condition. By Proposition 4.2.5 in \cite{Nowak-Yu}, the Higson--Roe condition passes to subspaces (we don't know whether property A passes to subspaces without the bounded geometry assumption), hence $P(\Gamma)$ cannot satisfy the Higson--Roe condition. 
\end{proof}

It follows that for the graphs from Example \ref{example2}, $P(\Gamma_1)$ does not have property A, and $P(\Gamma_2)$  has it, $P(\Gamma)$ for the graphs from Example \ref{example3} have property A iff the lengths of cycles in $\Gamma$ go to infinity, and $P(\Gamma)$ for the graph from Example \ref{example4} has property A.

\section{Concluding remarks}

%We conclude with the following two observations.

{\bf 1.} It follows from \cite{Ostrovsky} that if a space $P(\Gamma)$ is not of bounded geometry (and, hence, without property A) then it contains a subset of bounded geometry also without property A.  

\begin{thm}
Let $\Gamma$ be a rooted graph with controlled cycles and with uniformly bounded degrees. If $P(\Gamma)$ is not of bounded geometry then it contains a subset of bounded geometry without property A.
 
\end{thm}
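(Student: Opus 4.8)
The statement mirrors what \cite{Ostrovsky} gives abstractly: if a metric space $Y$ is coarsely equivalent to (or contains a copy of) a space without property A, and that witnessing space is itself expressible as a coarse union of finite spaces whose cardinalities blow up, then one can extract from $Y$ a bounded-geometry subspace that still fails property A. So the strategy is to produce, inside $P(\Gamma)$, a concrete coarse copy of Nowak's space $\sqcup_{n}(\Z)^{n}$, and then appeal to the Ostrovsky-type extraction to replace it by a bounded-geometry subspace without property A. The extra hypothesis of uniformly bounded degrees is exactly what makes the extracted subspace have \emph{bounded} geometry rather than just being a subset of a non-bounded-geometry space.

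\textbf{Step 1: Locate the copy of $X$.} This is already done inside the proof of Theorem \ref{Thmain}, $(3)\Rightarrow(1)$: since $P(\Gamma)$ is not of bounded geometry, by Theorem \ref{BG} there is an $n$ such that for each $m$ some $\gamma_m\in P_{v_m}(\Gamma)$ intersects $m$ cycles $C_1,\dots,C_m$ of length $\le n$, and the assignment $x\mapsto\delta_x$ gives a bilipschitz embedding $f\colon X=\sqcup_m(\Z)^m\to P(\Gamma)$ with $2\rho(x,y)\le d(f(x),f(y))\le n\rho(x,y)$ on each block. So $f(X)\subset P(\Gamma)$ is coarsely (indeed bilipschitzly) equivalent to Nowak's $X$, hence fails property A and even the Higson--Roe condition.

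\textbf{Step 2: Apply Ostrovsky's extraction.} The content of \cite{Ostrovsky} is that the Nowak space $X=\sqcup_m(\Z)^m$ (a coarse union of expanders-like finite spaces) contains a subspace $X_0\subset X$ of bounded geometry that still does not have property A. Since $f$ is bilipschitz on each block $(\Z)^m$ with constants $2$ and $n$ independent of $m$, the image $f(X_0)\subset P(\Gamma)$ is bilipschitz to $X_0$, hence $f(X_0)$ fails property A (property A is a coarse invariant). The only remaining point is that $f(X_0)$ genuinely has bounded geometry as a subspace of $P(\Gamma)$: a priori a bilipschitz image of a bounded-geometry space into a larger space is bounded-geometry \emph{with respect to the induced metric}, and the induced metric on $f(X_0)$ is exactly (up to the factors $2,n$) the metric of $X_0$ because the $d$-distances between the blocks $f((\Z)^m)$ are controlled by the coarse-union structure. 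So $f(X_0)$ is the desired subset.

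\textbf{Main obstacle.} The delicate point — and the reason the hypothesis ``uniformly bounded degrees'' appears — is controlling distances in $P(\Gamma)$ \emph{across different blocks} $P_{v_m}(\Gamma)$. Within one block everything reduces to the Hamming metric via Lemma \ref{lem0} and the controlled-cycles structure, but the coarse-union metric $d$ on $P(\Gamma)=\sqcup_v P_v(\Gamma)$ could in principle distort the inter-block geometry of $X_0$. Here one uses that $X$ itself is already a coarse disjoint union with $\alpha_{nm}\to\infty$, and that $f$ sends the $m$-th block into $P_{v_m}(\Gamma)$; the induced inter-block distances on $f(X)$ again satisfy a divergence condition of the form \eqref{e2}, so $f(X)$ is coarsely equivalent to $X$ as a \emph{coarse union}, not merely block-by-block. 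Uniformly bounded degrees guarantees that balls in $P(\Gamma)$ restricted to a single block are of size bounded independently of the block (this is implicit in the $(2)\Rightarrow(1)$ direction of Theorem \ref{BG}: the bound $2^{mn}$ on $|B_n(\gamma)\cap P_v(\Gamma)|$ is uniform once the number $m$ of relevant short cycles is), so that the bounded-geometry property extracted by Ostrovsky inside $X_0$ transfers to $f(X_0)$ with uniform ball-size bounds. Once these compatibility points are checked, the theorem follows by combining Step 1, Step 2, and the fact (Proposition 4.2.5 of \cite{Nowak-Yu}, already invoked above) that the relevant properties behave well under passing to subspaces and coarse equivalences.
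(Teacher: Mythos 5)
Your proposal follows essentially the same route as the paper: locate the bilipschitz copy of $X=\sqcup_m(\Z)^m$ inside $P(\Gamma)$ already produced in the proof of Theorem \ref{Thmain}, invoke Ostrovsky's construction of a bounded-geometry subspace of $X$ (the bilipschitz image of a coarse union of finite graphs) without property A, and push it forward. The paper's own proof is exactly this two-step reduction, so the approaches coincide.
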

\begin{proof} 
As any such $P(\Gamma)$ contains $X=\sqcup_{m\in\mathbb N}(\Z)^m$, it suffices to find a bounded geometry subset without property A in $X$. To this end we use \cite{Ostrovsky}. It is shown there that there exists a family of finite graphs $\Gamma_n$ such that 
\begin{enumerate}
\item $Y=\sqcup_{n\in\mathbb N}\Gamma_n$ has bounded geometry and does not have property A;
\item
there exists a bilipschitz map $f:\sqcup_{n\in\mathbb N}\Gamma_n\to X$
\end{enumerate}
(in fact, the map $f$ constructed in \cite{Ostrovsky} has range inside $l_1$, but it consists of 0-1 sequences, and the $l_1$ metric on this set is the same as the Hamming metric on $(\Z)^\infty$). As $f$ is a coarse equivalence, $f(Y)\subset X$ has bounded geometry and does not have property A.
\end{proof}

{\bf 2.} Note that simple paths cannot go back. To include more paths, we may consider the metric space $L(\Gamma)$ of simple loops in $\Gamma$ instead of the box spaces $P(\Gamma)$. It is possible to obtain the same results for $L(\Gamma)$ by allowing paths to pass vertices several times (in particular, loops should return back to the root), but each edge should still be passed not more than once. The construction of the inclusion of $L(\Gamma)$ into $(\Z)^\infty$ (and, hence, the metric on $L(\Gamma)$) should take into account orientation and duplicate all edges to distinguish cyclic paths going in opposite directions. All other arguments work for $L(\Gamma)$ with minor modifications (e.g. we have to take into account also cycles that have only a common vertex with a path). Therefore, Theorem \ref{Thmain} holds for $L(\Gamma)$ as well.

Unlike the space $L(\Gamma)$ of loops, the space $S(\Gamma)$ of all simple paths from the root without any restrictions on the endpoints is more complicated, and we don't know how to check property A for $S(\Gamma)$.   

{\bf 3.} Our choice of the metric $d$ on $S(\Gamma)$ is essential. For example, if we endow $S(\Gamma)$ with the Hausdorff distance metric $d^H$ then bounded geometry and property A fail to be equivalent. To see this, consider the graph $\Gamma=\Gamma_{\mathbf k,2}$ with $\mathbf k=(1,3,5,\ldots)$.
  
\begin{center}
\begin{tikzpicture}

\foreach \x in {0,2,...,6} {
    \node[circle, fill=black, inner sep=1pt] at (\x,0) {};
}
\foreach \x in {1,3,5} {
    \node[circle, fill=black, inner sep=1pt] at (\x,0.5) {};
}
\foreach \x in {1,3,5} {
    \node[circle, fill=black, inner sep=1pt] at (\x,-0.5) {};
}

\foreach \x in {0,2,...,6} {
    \draw (\x,0) to[out=60, in=180, looseness=1] (\x+1,0.5);
}
\foreach \x in {0,2,...,6} {
    \draw (\x,0) to[out=-60, in=-180, looseness=1] (\x+1,-0.5);
}

\foreach \x in {1,3,5} {
    \draw (\x,0.5) to[out=0, in=120, looseness=1] (\x+1,0);
}
\foreach \x in {1,3,5} {
    \draw (\x,-0.5) to[out=0, in=-120, looseness=1] (\x+1,0);
}

\node at (7.5,0) {$\cdots$}; 
\node at (-0.3,0) {$v_0$}; 
\node at (1,0.8) {$v_{1,+}$};
\node at (1,-0.8) {$v_{1,-}$};
\node at (1.7,0) {$v_2$}; 
\node at (3,0.8) {$v_{3,+}$};
\node at (3,-0.8) {$v_{3,-}$};
\node at (3.7,0) {$v_4$}; 
\node at (5,0.8) {$v_{5,+}$};
\node at (5,-0.8) {$v_{5,-}$};
\node at (5.7,0) {$v_6$};

\end{tikzpicture}

\end{center}
Let $\gamma,\delta\in P_v(\Gamma)$.
If $v=v_{2n}$, $n\in\mathbb N$, then $d^H(\gamma,\delta)\leq 1$. If $v=v_{{2n-1,\pm}}$ then $d^H(\gamma,\delta)\leq \frac{3}{2}$. Therefore, $P(\Gamma)$ with the Hausdorff metric is not of bounded geometry, but, $\diam P_v(\Gamma)\leq 3$ for any $v\in V(\Gamma)$ obviously implies property A for $P(\Gamma)$.

\section*{Acknowledgement}

The author is grateful to E. Troitsky for valuable comments.
 
This work was supported by the RSF grant 25-11-00018.

%%%%%%%%%%%%%%%%%%%%%%%%%%%%%%%%%%%%%%%%%%


\begin{thebibliography}{99}

\bibitem{graphs}
A. Brandst\"adt, V. B. Le, J. Spinrad. {\it Graph Classes: A Survey.} SIAM Monographs on Discrete Math. and Applications., 1999.

%\bibitem{leash}
%E. W. Chambers, \'E. C. de Verdi\`ere, J. Erickson, S. Lazard, F. Lazarus, S. Thite. {\it Homotopic Fr\'echet distance between curves or, walking your dog in the woods in polynomial time.} Computational Geometry, {\bf 43} (2010), 295--311.

\bibitem{Hamming}
R. W. Hamming. {\it Error detecting and error correcting codes.} The Bell System Technical J. {\bf 29} (1950), 147--160.

\bibitem{Nowak} 
P. W. Nowak. 
{\it Coarsely embeddable metric spaces without property A.} J. Funct. Anal. {\bf 252} (2007), 126--136.

\bibitem{Nowak-Yu}
P. W. Nowak, G. Yu. {\it Large scale geometry.} European Math. Soc., 2012.

\bibitem{Ostrovsky}
M. I. Ostrovskii. {\it Low-distortion embeddings of graphs with large girth.} J. Functional Anal. {\bf 262} (2012), 3548--3555.

\bibitem{Willett}
R. Willett. {\it Some notes on Property A.}
 Limits of Graphs in Group Theory and Computer Science, 191--281, EPFL Press, Lausanne, 2009.

\bibitem{Yu}
G. Yu. {\it The coarse Baum-Connes conjecture for spaces which admit a uniform embedding into Hilbert space.} Invent. Math. {\bf 139} (2000), 201--240.

\bibitem{Zhu}
J. Zhang, J. Zhu. {\it A weight-free characterisation of Yu's property A.} arXiv.org: 2412.16549.



\end{thebibliography}
\end{document}